\newcommand{\R}{\mathbb R}
\newcommand{\K}{\mathcal K}
\newcommand{\N}{\mathbb N}
\renewcommand{\L}{\mathcal L}
\newtheorem{thm}{Theorem}[section]
\newtheorem{cor}[thm]{Corollary}
\newtheorem{rmk}[thm]{Remark}
\begin{document}

%%%%%%%%%%%%%%%%%%%%%%%%%%%%%%%%%%%%%%%%%%%%%5

\title[Hermite-Hadamard inequality and applications]{On a generalization of the Hermite-Hadamard inequality and applications in convex geometry}

\author[B. Gonz\'alez Merino]{Bernardo Gonz\'alez Merino}
\address{Departamento de Did\'actica de la Matem\'atica, Facultad de Educaci\'on, Universidad de Murcia, 30100-Murcia, Spain}\email{bgmerino@um.es}

\thanks{2010 Mathematics Subject Classification. Primary 52A20; Secondary 52A38, 52A40.\\
This research is a result of the activity developed within the framework of the Programme in Support of Excellence Groups of the Regi\'on de Murcia, Spain, by Fundaci\'on S\'eneca, Science and Technology Agency of the Regi\'on de Murcia. Partially supported by Fundaci\'on S\'eneca project 19901/GERM/15, Spain, and by MICINN Project PGC2018-094215-B-I00 Spain.}

\date{\today}\maketitle

\begin{abstract}
In this paper we show the following result: if $C$ is an n-dimensional 0-symmetric convex compact set,
$f:C\rightarrow[0,\infty)$ is concave, and $\phi:[0,\infty)\rightarrow[0,\infty)$ is not identically zero, convex, with
$\phi(0)=0$, then
\[
\frac{1}{|C|}\int_C\phi(f(x))dx\leq \frac{1}{2}\int_{-1}^1\phi(f(0)(1+t))dt,
\]
where $|C|$ denotes the volume of $C$. If $\phi$ is strictly convex, equality holds
if and only if $f$ is affine, $C$ is a generalized symmetric cylinder and $f$ becomes $0$ at one of the basis
of $C$.

We exploit this inequality to answer a question of Francisco Santos on estimating the volume of a convex set
by means of the volume of a central section of it. Second, we also derive a corresponding estimate
for log-concave functions.
\end{abstract}
\date{\today}\maketitle

The classical Jensen's inequality \cite{J} states that
if $(X, \Sigma, \mu)$ is a probability space, then for any concave $f:\mathbb R\rightarrow\mathbb R$ and any $\mu$-integrable function $g:X \rightarrow\mathbb R$, we have that
\[
\int_{X}f(g(x))d\mu(x)\leq f\left(\int_{X}g(x)d\mu(x)\right),
\]
and moreover, equality holds if and only if either $f$ is affine or $g$ is independent of $x$.

Let $\K^n$ be the set of n-dimensional compact, convex sets.
A set $K\in\K^n$ is \emph{0-symmetric} if $K=-K$. Let $\K^n_0$ be the subset of $\K^n$ of 0-symmetric sets.
Notice that we will consistently use $C$ (resp.~$K$) in functional (resp.~geometric) inequalities.
For any set $K\in\K^n$, we denote by $|K|$ the \emph{volume} (or Lebesgue measure) of $K$ computed in its \emph{affine hull} $\mathrm{aff}(K)$, i.e.,
the smallest affine subspace containing $K$.
Let $\Vert x\Vert=\sqrt{x_1^2+\cdots+x_n^2}$ be the \emph{Euclidean norm} of $x=(x_1,\dots,x_n)\in\mathbb R^n$,
let $B^n_2=\{x\in\mathbb R^n:\Vert x\Vert\leq 1\}$ be the \emph{Euclidean unit ball} of $\mathbb R^n$, and
let $\omega_n=|B^n_2|$ be its volume. For every $x,y\in\mathbb R^n$, let $\langle x,y\rangle=x_1y_1+\cdots+x_ny_n$ be
the \emph{scalar product} of $x$ and $y$.
The \emph{center of mass} of $K\in\K^n$ is the point
\[
x_K=\frac{1}{|K|}\int_{K}xdx.
\]
A well-known consequence of Jensen's inequality is the following Hermite-Hadamard inequality:
for any $C\in\K^n$ and $f:C\rightarrow\R$ concave, then
\begin{equation}\label{eq:HH_Jensen}
\frac{1}{|C|}\int_Cf(x)dx\leq f(x_C),
\end{equation}
with equality sign if and only if $f$ is affine. It was named after Hermite 1881 and Hadamard 1893,
who proved independently \eqref{eq:HH_Jensen} in the 1-dimensional case. See \cite{DP} (and \cite{CalCar} or \cite{St})
and the references on it for other historical considerations and a comprehensive and complete view of this type of inequalities.

The mean value of $f$ measured in $C$ (the left-term in \eqref{eq:HH_Jensen}) has repeatedly appeared
during the development of different topics of Analysis and Geometry (cf.~\cite{HLP}).
For instance, Berwald \cite[Sect.~7]{Ber} studied monotonicity relations of $l_p$ means of concave functions over convex compact domains
(see also \cite{ABG} and \cite[Sect.~7]{AAGJV} for a translation of it).
Borell \cite{Bor} did a step further by showing some convexity relations in the same regard (Thms.~1 and 2).
See also Milman-Pajor \cite[2.6]{MP2} and \cite[Sect.~5]{GNT},
or the Hardy-Littlewood Maximal Function (cf.~\cite{Me}).
%Furthermore, notice that the Mean Value Theorem ensures the
%existence of a point $x_f\in C$ (which depends on $f$) such that $\frac{1}{|C|}\int_Cf(x)dx=f(x_f)$ whenever $f$ is continuous.

Let $\mathcal L^n_i$ be the set of \emph{i-dimensional linear subspaces} in $\mathbb R^n$.
For $K\in\mathcal K^n$ and $H\in\mathcal L^n_i$, let $P_HK$ be the \emph{orthogonal projection} of $K$ onto $H$.
Moreover, let $e_1,\dots,e_n$ be the vectors of the \emph{canonical basis} of $\mathbb R^n$. For every $A\subset\mathbb R^n$,
let $\mathrm{lin}(A)$ be the \emph{linear hull} of $A$, and let $A^\bot$ be the \emph{orthogonal subspace} to $A$, and let $\partial A$
be the \emph{boundary} of $A$.

In 2017 during the conference \emph{Convex, Discrete and Integral Geometry} \footnote{Bedlewo, Poland 2017 \url{http://bcc.impan.pl/17Convex/}}
Francisco Santos
%\footnote{Awarded Fulkerson Prize 2016 of the Mathematical Optimization Society and American Mathematical Society \url{https://personales.unican.es/santosf/}}
asked the following question: What is the smallest
constant $c_n > 0$ such that
\[
|K|\leq c_n|K\cap e_1^\bot|
\]
for
every $K\in\mathcal K^n$ with $P_{\mathrm{lin}\{e_1\}}K=[-e_1,e_1]$.
One of our aims is to compute this constant $c_n$.
A similar inequality is derived in \cite[Lemma 5.2]{IVS} where it is used to bound the volume of empty lattice 4-simplices in terms
of volumes of 3-lattice polytopes.
Notice that for every $K\in\mathcal K^n$ and $H\in\mathcal L^n_i$, Fubini's theorem implies that
\[
|K|\leq |P_HK|\max_{x\in H^\bot}|K\cap (x+H^\bot)|.
\]
There exist special subspaces for which the inequality above strengthens.
In this regard Spingarn \cite{S} and later Milman and Pajor \cite{MP} proved that if $K\in\K^n$ and $H\in\L^n_i$, then
\begin{equation}\label{eq:volumes_and_centroid}
|K|\leq |P_HK||K\cap(x_K+H^\bot)|.
\end{equation}
It is even known the worst deviation between the maximal volume section and the one passing through the centroid of $K$
(cf.~\cite{Fr},\cite{MM}, and further extensions in \cite{SY}).
Surprisingly enough, a consequence of Jensen's inequality \eqref{eq:HH_Jensen}
shows that (cf.~Theorem \ref{thm:center_of_Projection} below) for every $K\in\mathcal K^n$ and $H\in\mathcal L^n_{n-1}$
then
\begin{equation}\label{eq:SectProjJensen}
|K|\leq |P_HK||K\cap(x_{P_HK}+H^\bot)|,
\end{equation}
and this choice can \emph{sometimes} be better than \eqref{eq:volumes_and_centroid} (cf.~Remark \ref{rmk:x_Pbetterx_C}).
%and \ref{rmk:quantifyImprGeneral})
In this regard, we prove the following result, extending the inequality above when $P_HK\in\mathcal K^n_0$
and answering to the question posed at the beginning. The result below can sometimes be better than
\eqref{eq:volumes_and_centroid}
%(cf.~Remark \ref{rmk:quantifyImprovements})
up to a linear factor in the dimension of the subspace.
\begin{thm}\label{thm:0-symm_GeneralCase}
Let $K\in\K^n$ and $H\in\L^n_i$ be such that $P_HK=-P_HK$. Then
\[
|K|\leq\frac{2^{n-i}}{n-i+1}|P_HK||K\cap H^\bot|.
\]
If we assume w.l.o.g.~that $H=\R^i\times\{0\}^{n-i}$,
there is equality above if and only if there exist a $(n-i)\times i$ matrix $B_0$, and $u\in\R^i$
such that
\[
K\cap(x+H^\bot)=(x_1,B_0(x_1))+\lambda_x (K\cap H^\bot)\quad\text{where}\quad
\lambda_x=\frac{\langle u,x_1\rangle+|K\cap H^\bot|^\frac{1}{n-i}}{|K\cap H^\bot|^\frac{1}{n-i}},
\]
for every $x=(x_1,0)\in P_HK\subset\R^i\times\{0\}^{n-i}$.
If in addition $i\leq n-2$ then equality in the inequality implies that there exist
$(x_0,0)\in\R^i\times\R^{n-i}$ and $K_0\in\K^{i-1}$ such that $P_HK=[(-x_0,0),(x_0,0)]+(\{0\}\times K_0\times\{0\}^{n-i})$
and
\[
|K\cap(x+H^\bot)|=\left(1+\frac{\langle (x_0,0),x\rangle}{\Vert x_0\Vert^2}\right)|K\cap H^\bot|,
\]
for every $x\in P_HK$.
\end{thm}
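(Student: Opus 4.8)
As permitted in the statement, take $H=\R^i\times\{0\}^{n-i}$ and write points of $\R^n$ as $(x_1,y)$ with $x_1\in\R^i$, $y\in\R^{n-i}$. Put $C:=P_HK$, which is $0$-symmetric (so $0$ is an interior point and its centre), and assume $|K|>0$, so that $C$ is $i$-dimensional. For $x=(x_1,0)\in C$ write $K_{x_1}:=K\cap(x+H^\bot)$, a convex body in $x+H^\bot\cong\R^{n-i}$, and set
\[
f(x_1):=|K_{x_1}|^{\frac{1}{n-i}}\ \text{ on }C,\qquad \phi(s):=s^{n-i}\ \text{ on }[0,\infty).
\]
By the Brunn--Minkowski inequality (applied to $K_{x^{(1)}_1}$, $K_{x^{(2)}_1}$ and the section over their midpoint, which contains $\tfrac12(K_{x^{(1)}_1}+K_{x^{(2)}_1})$ by convexity of $K$) the function $f$ is concave and nonnegative, while $\phi$ is convex, $\phi(0)=0$, not identically $0$, and strictly convex exactly when $n-i\geq2$. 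Fubini's theorem gives $|K|=\int_C|K_{x_1}|\,dx_1=\int_C\phi(f(x_1))\,dx_1$, so applying our generalized Hermite--Hadamard inequality (in dimension $i$) to $C,f,\phi$,
\[
|K|\leq|C|\cdot\frac12\int_{-1}^1\phi\bigl(f(0)(1+t)\bigr)\,dt=|C|\,\frac{f(0)^{n-i}}{2}\int_0^2 s^{n-i}\,ds=\frac{2^{n-i}}{n-i+1}\,|P_HK|\,|K\cap H^\bot|,
\]
using $f(0)^{n-i}=|K\cap H^\bot|$; this is the inequality.

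Equality above is equivalent to equality in the functional inequality for this $C,f,\phi$. When $n-i=1$, $\phi$ is affine and this reduces to the classical Hermite--Hadamard equality \eqref{eq:HH_Jensen}, forcing $f$ to be affine. When $n-i\geq2$, $\phi$ is strictly convex, so the equality characterization of our inequality forces $f$ affine, $C=P_HK$ a generalized symmetric cylinder, and $f\equiv0$ on one of its bases. In that case write, after a rotation inside $H$, $P_HK=[(-x_0,0),(x_0,0)]+(\{0\}\times K_0\times\{0\}^{n-i})$ with $K_0\in\K^{i-1}$; since $f$ is affine, nonnegative and vanishes on one base, its gradient in the directions of $K_0$ is zero, so $f$ depends only on the coordinate along $x_0$, and $f(0)=|K\cap H^\bot|^{\frac{1}{n-i}}$ pins it down to $f(x)=\bigl(1+\langle(x_0,0),x\rangle/\Vert x_0\Vert^2\bigr)|K\cap H^\bot|^{\frac{1}{n-i}}$, whence the stated expression for $|K\cap(x+H^\bot)|=f(x)^{n-i}$. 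In every case $f$ is affine.

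It then remains to upgrade ``$f$ affine'' to the homothety description of the sections. For collinear $x^{(1)}_1,x^{(0)}_1,x^{(2)}_1\in C$ with $x^{(0)}_1$ the midpoint, convexity of $K$ gives $K_{x^{(0)}_1}\supseteq\tfrac12(K_{x^{(1)}_1}+K_{x^{(2)}_1})$, and Brunn--Minkowski gives $f(x^{(0)}_1)\geq|\tfrac12(K_{x^{(1)}_1}+K_{x^{(2)}_1})|^{\frac{1}{n-i}}\geq\tfrac12(f(x^{(1)}_1)+f(x^{(2)}_1))$; affinity of $f$ makes both equalities. The first, together with the inclusion and the fact that two convex bodies of equal volume, one contained in the other, coincide, yields $K_{x^{(0)}_1}=\tfrac12(K_{x^{(1)}_1}+K_{x^{(2)}_1})$; the second is the equality case of Brunn--Minkowski, forcing $K_{x^{(1)}_1}$ and $K_{x^{(2)}_1}$ to be homothetic. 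Taking $x^{(2)}_1=0$ and letting $x^{(1)}_1$ run over $C$, every section is a homothet of $K\cap H^\bot$ with dilation ratio $f(x_1)/f(0)$, an affine function that one matches to the prescribed $\lambda_x$ (with $u$ a multiple of $\nabla f$); looking at the centres of the sections in the identity $K_{x^{(0)}_1}=\tfrac12(K_{x^{(1)}_1}+K_{x^{(2)}_1})$ shows the centre of $K_{x_1}$ is a midpoint-affine, hence affine, function of $x_1$ vanishing at $x_1=0$, which we record as the matrix $B_0$; this gives the claimed description, and conversely the configurations described are readily seen to attain equality. I expect the main obstacle to be precisely this last part: leveraging the equality case of the functional Hermite--Hadamard inequality together with the equality case of Brunn--Minkowski to convert ``$f$ affine'' into the rigid cylinder-plus-homothety structure, while also disposing of the degenerate case $|K|=0$.
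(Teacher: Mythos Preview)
Your proposal is correct and follows essentially the same route as the paper: apply the functional Hermite--Hadamard inequality (Theorem~\ref{thm:Ineq_Concave_func}, via its specialization Corollary~\ref{cor:tAlpha} with $\alpha=n-i$) to the concave function $f(x_1)=|K_{x_1}|^{1/(n-i)}$ on $C=P_HK$, then read off the equality structure from the equality case there combined with the Brunn--Minkowski equality case. Your derivation of the matrix $B_0$ via the midpoint identity and the observation that the centre map is midpoint-affine (hence affine, with $c(0)=0$ forced by $\lambda_0=1$) is in fact more explicit than the paper's, which simply asserts the existence of $B_0$ ``since $K$ is convex''.
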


Note that Theorem \ref{thm:0-symm_GeneralCase} solves the question of Francisco Santos with optimal constant $c_n=2^n/n$, also
characterizing the equality case.
At the core of the proof of Theorem \ref{thm:0-symm_GeneralCase} rests a generalization of \eqref{eq:HH_Jensen},
which is the main result of the paper.
\begin{thm}\label{thm:Ineq_Concave_func}
Let $C\in\K^n_0$, let $f:C\rightarrow[0,\infty)$ be concave, and let $\phi:[0,\infty)\rightarrow[0,\infty)$
be not identically zero, convex, such that $\phi(0)=0$. Then
\[
\frac{1}{|C|}\int_C\phi(f(x))dx\leq \frac{1}{2}\int_{-1}^1\phi(f(0)(1+t))dt.
\]
If $\phi$ is strictly convex, equality holds if and only if there after applying a suitable rotation
exist
$C_0\in\K^{n-1}_0$ and $x_0\in\R^n$ with $(x_0)_1>0$ such that
\[
C=[-x_0,x_0]+(\{0\}\times C_0)
\]
and such that $f$ is an affine function with $f(-x_0+x)=0$, for every $x\in\{0\}\times\R^{n-1}$.
\end{thm}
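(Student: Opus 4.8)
\emph{Proof strategy.} The plan is to reduce the statement to the case where $f$ is \emph{affine}, and then to treat that case by a one‑dimensional slicing together with a ``peakedness'' comparison. Write $m=f(0)$. First observe that the hypotheses on $\phi$ force it to be non‑decreasing on $[0,\infty)$: a convex $\phi$ with $\phi(0)=0$ and $\phi\ge 0$ has non‑negative right derivative at $0$, and slopes of a convex function increase; moreover $\phi$ is strictly increasing on $[0,\infty)$ when it is strictly convex. Now, assuming as we may (after passing to $\mathrm{aff}(C)=\lin(C)$) that $0\in\inter C$, concavity provides a supergradient $a\in\R^n$ of $f$ at $0$, so that $f(x)\le f(0)+\iprod{a}{x}=:\ell(x)$ on $C$. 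Then $\ell$ is affine, $\ell(0)=m$, and $\ell\ge f\ge 0$ on $C$; since $\phi$ is non‑decreasing, $\int_C\phi(f)\,dx\le\int_C\phi(\ell)\,dx$. Hence it suffices to prove the inequality for an affine $\ell=m+\iprod{a}{\,\cdot\,}$ with $\ell\ge 0$ on $C$. This reduction is, I think, the conceptual heart of the matter.

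For the affine case, if $a=0$ then $\ell\equiv m$ and the claim is exactly Jensen's inequality for $\phi$ against the uniform density on $[-1,1]$ (note $\tfrac12\int_{-1}^1 m(1+t)\,dt=m$). If $a\neq 0$, put $\hat a=a/\Vert a\Vert$ and let $\alpha(\xi)=\vol_{n-1}\bigl(C\cap\{\iprod{\hat a}{\,\cdot\,}=\xi\}\bigr)$ for $\xi$ in the interval $[-h,h]$ with $h=h_C(\hat a)$ (symmetric by $0$‑symmetry of $C$). Slicing $C$ by the hyperplanes orthogonal to $\hat a$ and using Fubini,
\[
\int_C\phi(\ell)\,dx=\int_{-h}^h\phi(m+\Vert a\Vert\xi)\,\alpha(\xi)\,d\xi .
\]
By Brunn--Minkowski $\alpha^{1/(n-1)}$ is concave (and $\equiv 1$ when $n=1$), and $\alpha$ is even by $0$‑symmetry, so $\alpha$ is non‑increasing on $[0,h]$. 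Rescaling $\xi=h\sigma$, setting $\gamma=\Vert a\Vert h=h_C(a)$ and $w(\sigma)=\alpha(h\sigma)$, this becomes $h\int_{-1}^1\phi(m+\gamma\sigma)w(\sigma)\,d\sigma$, while $|C|=h\int_{-1}^1 w$; here $w$ is even and non‑increasing on $[0,1]$, and $0\le\gamma\le m$, the latter being precisely the condition $\ell\ge 0$ on $C$, i.e.\ $h_C(a)\le m$.

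It then remains to show $\int_{-1}^1\phi(m+\gamma\sigma)w(\sigma)\,d\sigma\le\bigl(\int_{-1}^1 w\bigr)\cdot\tfrac12\int_{-1}^1\phi(m(1+t))\,dt$, which I would split in two. (i) The probability density $w/\!\int w$ is symmetric and non‑increasing away from $0$, hence more peaked than the uniform density, so the associated measure precedes the uniform measure in the convex order; concretely I would verify $\int_t^1(s-t)\bigl(\tfrac12-\tfrac{w(s)}{\int w}\bigr)\,ds\ge 0$ for all $t\in[-1,1]$, by differentiating twice in $t$, using that $\tfrac12-w/\!\int w$ changes sign exactly once on $[0,1]$, and checking the boundary values at $t=\pm 1$. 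Applying this to the convex function $\sigma\mapsto\phi(m+\gamma\sigma)$ replaces the $w$‑average by the uniform average. (ii) Since $0\le\gamma\le m$ we have $m+\gamma\sigma=(1-\tfrac{\gamma}{m})m+\tfrac{\gamma}{m}\,m(1+\sigma)$, so convexity of $\phi$ together with $\phi(m)\le\tfrac12\int_{-1}^1\phi(m(1+t))\,dt$ (Jensen) upgrades the uniform average to the target. Chaining (i)--(ii) and multiplying by $h\int_{-1}^1 w=|C|$ completes the proof of the inequality. The estimate in (i) — the peakedness/convex‑order comparison — is the step I expect to require the only genuine work; everything else is formal once the reduction to affine $f$ is in place.

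For the equality case, suppose $\phi$ is strictly convex (hence strictly increasing) and equality holds. Then $\phi(f)=\phi(\ell)$ a.e.\ forces $f=\ell$, so $f$ is affine, and strict convexity forces equality in (i) and (ii), which yields $a\neq 0$, $\gamma=m$ (so $\min_C\ell=0$) and $w$ constant, i.e.\ $\alpha$ constant on $[-h,h]$. Constant section volume throughout $[-h,h]$, via the equality case of Brunn--Minkowski, forces every section $C\cap\{\iprod{\hat a}{\,\cdot\,}=\xi\}$ to be a translate of a single symmetric $(n-1)$‑dimensional body; rotating $\hat a$ to $e_1$ this gives $C=[-x_0,x_0]+(\{0\}\times C_0)$ with $C_0\in\K^{n-1}_0$ and $(x_0)_1=m/\Vert a\Vert>0$, while $f=\ell=m+\Vert a\Vert x_1$ is constant on each section and vanishes on the base $-x_0+(\{0\}\times\R^{n-1})$. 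Conversely, for such a pair $(C,f)$ one checks directly that equality holds, which completes the characterization.
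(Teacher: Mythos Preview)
Your proof is correct and shares the overall scheme with the paper's---reduce to an affine majorant $\ell$ of $f$ with $\ell(0)=f(0)$, slice $C$ orthogonally to $\nabla\ell$, then push the section--volume density to the uniform one and the slope to its maximal value---but the execution is genuinely different. After the reduction to affine $g$, the paper first Schwarz--symmetrizes $C$ about the slicing direction, then builds a one--parameter family of cylinders $R_t$ with $R_{t_0}\subset C'\subset R_0$, picks $R=R_{t^*}$ with $|R|=|C'|$, and compares the integrals over $C'\cap R$ and over the symmetric difference separately via the pointwise inequality $\phi(a-r)+\phi(a+r)\le\phi(a-\gamma r)+\phi(a+\gamma r)$; the slope upgrade ($\delta\to g(0)$) and the body--to--cylinder replacement are interleaved. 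You instead decouple the two moves cleanly: step~(i) is a direct peakedness/convex--order argument (your verification that $G(t)=\int_t^1(s-t)\bigl(\tfrac12 - w(s)/\!\int w\bigr)\,ds\ge 0$ is precisely second--order stochastic dominance of the uniform law over any even unimodal law on $[-1,1]$), and step~(ii) is a single use of convexity, writing $m+\gamma\sigma$ as a convex combination of $m$ and $m(1+\sigma)$, followed by Jensen for $\phi(m)$. Your route dispenses with both the Schwarz symmetrization and the explicit cylinder family, at the cost of invoking (and verifying) the convex--order comparison; the paper's route is more hands--on but longer. The equality analyses end up the same: strict convexity forces $f$ affine, $\gamma=m$ (so $\min_C f=0$), and constant section volume, whence Brunn--Minkowski equality gives the cylinder structure of $C$.
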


Notice that, due to the convexity of $\phi$, the term $\int_{-1}^1\phi(f(0)(1+t))dt/2$ in Theorem \ref{thm:Ineq_Concave_func}
is larger than $c\cdot\phi(f(0))$, for some constant $c\geq 1$, as in the case $\phi(t)=t^m$, $m\in\mathbb N$ (cf.~Corollary \ref{cor:tAlpha}).
%One can generally obtain a \emph{similar inequality} with constant 1 at the cost of choosing a \emph{worse center}.
Milman and Pajor (see \cite{MP}) proved that if $f:\R^n\rightarrow[0,\infty)$ is an integrable \emph{log-concave function}
(i.e.~$\log(f)$ is concave),
and $\mu:\R^n\rightarrow[0,\infty)$ is a probability measure, then
\begin{equation}\label{eq:MilmanPajor}
\int_{\R^n}f(x)d\mu(x)\leq f\left(\int_{\R^n}x\frac{f(x)}{\int_{\R^n}f(z)d\mu(z)}d\mu(x)\right),
\end{equation}
and equality holds if and only if $f(x)$ is independent of $x$. A direct consequence of this
result is the following Hermite-Hadamard inequality: for any $C\in\K^n$, $f:C\rightarrow[0,\infty)$ concave,
and $m\in\N$, then
\begin{equation}\label{eq:HHcenter_of_mass}
\frac{1}{|C|}\int_C f(x)^m dx\leq f(x_{f,m})^m,
\end{equation}
where $x_{f,m}=\int_{C}x\frac{f(x)^m}{\int_{\R^n}f(z)^mdz}dx$ ($f^m$ is log-concave if $f$ is concave).
Notice that \eqref{eq:HHcenter_of_mass} has a better constant than in Corollary \ref{cor:tAlpha}; however,
only in the latter the center is \emph{independent} of the function.

Using Theorem \ref{thm:Ineq_Concave_func} we also derive a Hermite-Hadamard inequality as in \eqref{eq:MilmanPajor}
evaluated at the center of mass of the domain. Notice that if $f(0)=f_{min}$, since $\lim_{a\rightarrow 1+}\frac{a^2-1}{\log (a^2)}=1$,
the right-term below becomes $f_{min}$.
\begin{thm}\label{thm:log-concaveDirect}
Let $C\in\mathcal K^n_0$ and let $f:C\rightarrow(0,\infty)$ be log-concave and continuous. Then
\[
\frac{1}{|C|}\int_Cf(x)dx\leq f_{min}\frac{(f(0)/f_{min})^2-1}{\log((f(0)/f_{min})^2)},
\]
where $f_{min}=\min\{f(x):x\in C\}$.
Equality holds if and only if there after applying a suitable rotation exist
$C_0\in\K^{n-1}_0$ and $x_0\in\R^n$ with $(x_0)_1>0$ such that
\[
C=[-x_0,x_0]+(\{0\}\times C_0)
\]
and $f=e^u$ where $u:C\rightarrow\R$ is an affine function with $u(-x_0+x)=\log(f_{min})$, for every $x\in\{0\}\times\R^{n-1}$.
\end{thm}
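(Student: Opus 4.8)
The plan is to derive Theorem~\ref{thm:log-concaveDirect} directly from Theorem~\ref{thm:Ineq_Concave_func} after a suitable choice of the two functions occurring there. First note that, $f$ being continuous on the compact set $C$, the minimum defining $f_{min}$ is attained, and $f_{min}>0$ since $f>0$. Set
\[
g(x):=\log\!\Big(\frac{f(x)}{f_{min}}\Big)=\log f(x)-\log f_{min},\qquad x\in C.
\]
As $f$ is log-concave, $\log f$ is concave, hence so is $g$; moreover $g$ is continuous and $g\ge 0$ on $C$, so $g\colon C\to[0,\infty)$ is an admissible inner function in Theorem~\ref{thm:Ineq_Concave_func}. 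For the outer function take $\phi(s):=e^{s}-1$, which maps $[0,\infty)$ into $[0,\infty)$, satisfies $\phi(0)=0$, is not identically zero, and is \emph{strictly} convex (so the equality part of Theorem~\ref{thm:Ineq_Concave_func} applies). The one thing to be alert to here is to use $e^{s}-1$ rather than $e^{s}$, precisely so that $\phi(0)=0$.

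Next I would substitute into the inequality of Theorem~\ref{thm:Ineq_Concave_func}. The left-hand side becomes
\[
\frac{1}{|C|}\int_C\phi(g(x))\,dx=\frac{1}{|C|}\int_C\big(e^{g(x)}-1\big)\,dx=\frac{1}{f_{min}\,|C|}\int_C f(x)\,dx-1 .
\]
For the right-hand side, write $a:=g(0)=\log\!\big(f(0)/f_{min}\big)\ge 0$ and substitute $u=1+t$:
\[
\frac{1}{2}\int_{-1}^{1}\phi\big(a(1+t)\big)\,dt=\frac{1}{2}\int_{0}^{2}\big(e^{au}-1\big)\,du=\frac{e^{2a}-1}{2a}-1 .
\]
Since $e^{2a}=\big(f(0)/f_{min}\big)^{2}$ and $2a=\log\!\big((f(0)/f_{min})^{2}\big)$, the last expression equals $\dfrac{(f(0)/f_{min})^{2}-1}{\log((f(0)/f_{min})^{2})}-1$. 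Cancelling the common $-1$ and multiplying by $f_{min}$ yields precisely the claimed inequality. (When $a=0$, i.e.\ $f(0)=f_{min}$, the right-hand side is read through the limit $\lim_{u\to 1}\frac{u-1}{\log u}=1$ recorded before the statement; this instance is discussed below.)

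For the equality characterization I would invoke the ``if and only if'' of Theorem~\ref{thm:Ineq_Concave_func}, legitimate because $\phi$ is strictly convex: equality in Theorem~\ref{thm:log-concaveDirect} holds exactly when it holds there, that is (after a suitable rotation) when there exist $C_0\in\K^{n-1}_0$ and $x_0\in\R^n$ with $(x_0)_1>0$ such that $C=[-x_0,x_0]+(\{0\}\times C_0)$ and $g$ is affine with $g(-x_0+x)=0$ for every $x\in\{0\}\times\R^{n-1}$. Unwinding $g=\log f-\log f_{min}$: ``$g$ affine'' says $\log f$ is affine, i.e.\ $f=e^{u}$ with $u$ affine, and ``$g(-x_0+x)=0$'' says $u(-x_0+x)=\log f_{min}$ on $\{0\}\times\R^{n-1}$, which is exactly the assertion. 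A short remark disposes of the degenerate instance $f(0)=f_{min}$: there $g\ge 0$ is concave with $g(0)=0$, so concavity along the segment $[-x,x]\subset C$ forces $g(x)+g(-x)\le 2g(0)=0$, hence $g\equiv 0$ and $f\equiv f_{min}$, in which case both sides of the inequality equal $f_{min}$ (this trivial equality instance should be noted separately).

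I do not expect a genuine obstacle here: the substantive content is already carried by Theorem~\ref{thm:Ineq_Concave_func}. The only points that demand care are (i) shifting the exponential to $e^{s}-1$ so that $\phi(0)=0$; (ii) the elementary primitive of $e^{au}$ on $[0,2]$; and (iii) the back-translation of the equality conditions of Theorem~\ref{thm:Ineq_Concave_func}, together with the observation that $f(0)=f_{min}$ forces $f\equiv f_{min}$. Everything else is straightforward substitution.
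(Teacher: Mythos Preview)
Your proof is correct and follows essentially the same route as the paper: apply Theorem~\ref{thm:Ineq_Concave_func} to the concave function $g=\log f-\log f_{min}$ (equivalently $u-u_0$ in the paper's notation) with $\phi(s)=e^{s}-1$, compute the right-hand side as $(e^{2a}-1)/(2a)-1$, and read off the equality case from the strict convexity of $\phi$. Your additional remark isolating the degenerate instance $f(0)=f_{min}$ (hence $f\equiv f_{min}$) is a welcome clarification that the paper leaves implicit.
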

Notice that one could relax the continuity of $f$ above, by replacing it by $f(x)\geq c$ for some $c>0$.
However, log-concave functions are continuous in the interior of their domain,
and their integral is not affected by changes in the values attained in the boundary of its domain.

The \emph{Brunn-Minkowski inequality} states that for any $K_1,K_2\in\K^n$ and $\lambda\in[0,1]$, then
\begin{equation}\label{eq:BandM}
|(1-\lambda)K_1+\lambda K_2|^\frac1n\geq(1-\lambda)|K_1|^\frac1n+\lambda|K_2|^\frac1n,
\end{equation}
and equality holds if and only if $K_1$ and $K_2$ are dilates, or if they are lower dimensional
and contained in parallel hyperplanes (see \cite{Ga} and the references therein for an insightful and complete study of this inequality).

We split the proofs of the results into two sections. In Section \ref{sec:HH_ineqs} we prove the functional
inequality, i.e.~Theorems \ref{thm:Ineq_Concave_func} and \ref{thm:log-concaveDirect}.
Afterwards in Section \ref{sec:RS_ineqs} we show some volumetric
inequalities solving in particular the question of Francisco Santos posed above.

\section{Proof of the Orlicz-Jensen-Hermite-Hadamard type inequalities}\label{sec:HH_ineqs}

Let us start this section by remembering that the \emph{Schwarz symmetrization} of $K\in\K^n$
with respect to $\mathrm{lin}(u)$, $u\in\R^n\setminus\{0\}$, is the set
\[
\sigma_u(K)=\bigcup_{t\in\R}\left(tu+r_t(B^n_2\cap u^\bot)\right),
\]
where $r_t\geq 0$ is such that $|K\cap(tu+u^\bot)|=r_t^{n-1}\omega_{n-1}$.
It is well-known that $\sigma_u(K)\in\K^n$ and that $|\sigma_u(K)|=|K|$ (cf.~\cite[Section 9.3]{Gru} or \cite{Sch} for more details).
For every $K\in\mathcal K^n$ and $x\in\mathbb R^n\setminus\{0\}$, the \emph{support function} of $K$ at $x$
is defined by $h(K,x)=\sup\{\langle x,y\rangle:y\in K\}$.

\begin{proof}[Proof of Theorem \ref{thm:Ineq_Concave_func}]
Since $f$ is concave and non-negative in $C$, there
exists an affine function
$g:C\rightarrow[0,\infty)$
such that
\[
g(0)=f(0)\quad\text{and}\quad g(x)\geq f(x)\,\text{ for every }x\in C.
\]
Since $\phi$ is convex with $\phi(0)=0$, for any $x_2>x_1>0$ we have that
\[
0\leq\frac{\phi(x_1)-0}{x_1-0}\leq\frac{\phi(x_2)-\phi(x_1)}{x_2-x_1},
\]
i.e., $\phi$ is non-decreasing. Thus
\begin{equation}\label{eq:affine}
\int_C\phi(f(x))dx\leq \int_C\phi(g(x))dx.
\end{equation}
Now let $H:=\mathrm{aff}(G(g))$, where $G(g)=\{(x,g(x))\in C\times\mathbb R\}$ is the graph of $g$,
and observe that $H$ is an affine hyperplane in $\mathbb R^{n+1}$. Let us furthermore observe
that since $H\cap(g(0)e_{n+1}+\mathrm{lin}(\{e_1,\dots,e_n\}))\neq\emptyset$,
$\mathrm{dim}(H\cap(g(0)e_{n+1}+\mathrm{lin}(\{e_1,\dots,e_n\})))\geq n-1$, and there exists
$L\in\L^n_{n-1}$ such that
\[
g(0)e_{n+1}+(L\times\{0\})\subset H\cap(g(0)e_{n+1}+\mathrm{lin}(\{e_1,\dots,e_n\})).
\]
After a suitable rotation, we can assume that $L=\mathrm{lin}(\{e_2,\dots,e_n\})$, that $h(C,e_1)=t_0>0$, and that
\[
(t_0,(x_0)_2,\dots,(x_0)_n,g(0)+\delta)\in G(g),
\]
for some $(t_0,(x_0)_2,\dots,(x_0)_n)\in C$ and some $\delta\geq 0$.
Since $C$ is 0-symmetric and $g$ is affine, $(-t_0,-(x_0)_2,\dots,-(x_0)_{n},g(0)-\delta)\in G(g)$ too.
Observe that $g(0)-\delta\geq f((-t_0,-(x_0)_2,\dots,-(x_0)_{n}))\geq 0$, i.e., $\delta\leq g(0)$.

Observe also that $g$ is constant on each affine subspace $M_t=\{(t,x_2,\dots,x_n)\in C\}$, $t\in[-t_0,t_0]$.
Hence, if $(t,x_2,\dots,x_n)\in C$, let
\[
g(t,x_2,\dots,x_n)=g(0)+\frac{t}{t_0}\delta.
\]
Using Fubini's formula we have that
\[
\int_C \phi(g(x))dx=\int_{-t_0}^{t_0}\phi\left(g(0)+\frac{t\delta}{t_0}\right)|M_t|dt.
\]

Let us consider now $C':=\sigma_{e_1}(C)$.
If we denote by $M_t':=\{(t,x_2,\dots,x_n)\in C'\}$ for every $t\in[-t_0,t_0]$, then
\[
|M_t|=|M_t'|\quad\text{for every }t\in[-t_0,t_0]
\]
and in particular $|C|=|C'|$. Moreover, we also have that
$g(t,x_2,\dots,x_n)=g(0)+\frac{t}{t_0}\delta$ for every $(t,x_2,\dots,x_n)\in M_t'$.
Therefore
\[
\int_C\phi(g(x))dx=\int_{-t_0}^{t_0}\phi\left(g(0)+\frac{t\delta}{t_0}\right)|M'_t|dt.
\]
We now define the cylinders
\[
R_t:=(-te_1+M_t')+[-t_0e_1,t_0e_1]\quad\text{for every }t\in[0,t_0].
\]
We now prove that $R_{t_0}\subset C'\subset R_0$. For the left inclusion, since $C'$ is 0-symmetric
$-2t_0e_1+M'_{t_0}=(-t_0e_1+L)\cap C'\subset C'$ and $M_{t_0}'\subset C'$.
Then, the convexity of $C'$ yields $R_{t_0}=\mathrm{conv}((-2t_0e_1+M'_{t_0})\cup M'_{t_0})\subset C'$.
For the right inclusion, since $M_t'=(te_1+L)\cap C$ and $(-te_1+L)\cap C=-2te_1+M'_t$, $t\in[0,t_0]$, the convexity of $C'$
yields
\[
-te_1+M_t'=\frac12M_t'+\frac12(-2te_1+M'_t)\subset L\cap C'=M_0',
\]
from which $C'\subset M_0'+[-t_0e_1,t_0e_1]=R_0$, as desired.

Moreover, $(R_t)_t$ is a continuously decreasing family,
and thus there exists $t^*\in[0,t_0]$ such that $|R_{t^*}|=|C'|$. Let $R:=R_{t^*}$
and let $M_t'':=\{(t,x_2,\dots,x_n)\in R\}$ for $t\in[-t_0,t_0]$. Let us observe
that since $R$ and $C'$ are 0-symmetric and $M_t'$ and $M_t''$ are $(n-1)$-Euclidean balls centered
at $te_1$
\[
M_t'\subset M_t''\text{ if }|t|\in[t^*,t_0]\text{ and }M_t''\subset M_t'\text{ if }|t|\in[0,t^*].
\]
We also observe that $|C'|=|R|$ implies that $|C'\setminus R|=|R\setminus C'|$.
Let us furthermore denote by
\[
M_t^*:=M_t'\cap M_t''\quad\text{and}\quad M_t^{**}:=(M_t'\setminus M_t'')\cup(M_t''\setminus M_t').
\]
Then
\begin{equation}\label{eq:the_two_integrals}
\begin{split}
& \int_{-t_0}^{t_0}\phi\left(g(0)+\frac{t\delta}{t_0}\right)|M'_t|dt\\
& = \int_{-t_0}^{t_0}\phi\left(g(0)+\frac{t\delta}{t_0}\right)|M^*_t|dt+
\int_{-t^*}^{t^*}\phi\left(g(0)+\frac{t\delta}{t_0}\right)|M^{**}_t|dt.
\end{split}
\end{equation}

We start bounding from above the simpler left integral in \eqref{eq:the_two_integrals}, whose domain of integration is $C'\cap R$.
Let us observe that for every $a,r,\lambda\in\R$, $r\geq 0$, $\gamma\geq 1$, with $a-\gamma r\geq 0$,
we have that $\phi(a-r)+\phi(a+r)\leq\phi(a-\gamma r)+\phi(a+\gamma r)$.
In order to see this, remember that the definition of convexity of $\phi$ implies
\[
\frac{\phi(a+r)-\phi(a-r)}{2r},\frac{\phi(a+\gamma r)-\phi(a-\gamma r)}{2\gamma r}\leq\frac{\phi(a+\gamma r)-\phi(a-r)}{(\gamma+1)r},
\]
and denote by $m_1,m_3,m_2$ these slopes, respectively.
From this, if we define the lines
$y_1-(1/2)(\phi(a-r)+\phi(a+r))=m_1(x-a)$, $y_2-\phi(a-r)=m_2(x-r)$ and $y_3-(1/2)(\phi(a-\gamma r)+\phi(a+\gamma r))=m_3(x-a)$,
then we have that $y_1\leq y_2$ if $x\geq a-r$ and $y_2\leq y_3$ if $x\leq a+\gamma r$. In particular,
\begin{equation}\label{eq:convexFourFunc}
\frac12(\phi(a-r)+\phi(a+r))=y_1(a)\leq y_2(a)\leq y_3(a)=\frac12(\phi(a-\gamma r)+\phi(a+\gamma r)),
\end{equation}
as desired.

Since $\phi$ is a convex function, $\delta\rightarrow\phi(g(0)+(t/t_0)\delta)$ is convex too, $\delta\in[0,g(0)]$, using \eqref{eq:convexFourFunc}
with $a=g(0)$, $r=\delta t/t_0$ and $\gamma=g(0)/\delta$, we see that
\[
\begin{split}
& \int_{-t_0}^{t_0}\phi\left(g(0)+\frac{t\delta}{t_0}\right)|M^*_t|dt\\
& = \int_{0}^{t_0}\left(\phi\left(g(0)+\frac{t}{t_0}\delta\right)+\phi\left(g(0)-\frac{t}{t_0}\delta\right)\right)|M^*_t|dt\\
& \leq \int_{0}^{t_0}\left(\phi\left(g(0)+\frac{t}{t_0}g(0)\right)+\phi\left(g(0)-\frac{t}{t_0}g(0)\right)\right)|M^*_t|dt\\
& = \int_{-t_0}^{t_0}\phi\left(g(0)\left(1+\frac{t}{t_0}\right)\right)|M^*_t|dt.
\end{split}
\]

Now we focus in bounding from above the right integral in \eqref{eq:the_two_integrals}, whose domain is $(C'\setminus R)\cup(R\setminus C')$, partially using ideas from above. Using again that $\phi$ is convex, then $\delta\rightarrow\phi(g(0)+(t/t_0)\delta)$ is convex too, $\delta\in[0,g(0)]$.
Hence using \eqref{eq:convexFourFunc} yields
\begin{equation}\label{eq:delta}
\begin{split}
& \int_{-t^*}^{t^*}\left(g(0)+\frac{t\delta}{t_0}\right)|M^{**}_t|dt\\
&= \int_0^{t^*}\left(\phi\left(g(0)+\frac{t}{t_0}\delta\right)+\phi\left(g(0)-\frac{t}{t_0}\delta\right)\right)|M_t^{**}|dt\\
& \leq \int_0^{t^*}\left(\phi\left(g(0)\left(1+\frac{t}{t_0}\right)\right)+\phi\left(g(0)\left(1-\frac{t}{t_0}\right)\right)\right)|M_t^{**}|dt.
\end{split}
\end{equation}
Once more since $\phi$ is convex, $t\rightarrow\phi(g(0)(1+t/t_0))$ is convex too, together with \eqref{eq:convexFourFunc}, we get that
\begin{equation}\label{eq:CandR}
\begin{split}
& \int_0^{t^*}\left(\phi\left(g(0)\left(1+\frac{t}{t_0}\right)\right)+\phi\left(g(0)\left(1-\frac{t}{t_0}\right)\right)\right)|M_t^{**}|dt\\
& \leq \left(\phi\left(g(0)\left(1+\frac{t^*}{t_0}\right)\right)+\phi\left(g(0)\left(1-\frac{t^*}{t_0}\right)\right)\right)\int_0^{t^*}|M_t^{**}|dt \\
& = \left(\phi\left(g(0)\left(1+\frac{t^*}{t_0}\right)\right)+\phi\left(g(0)\left(1-\frac{t^*}{t_0}\right)\right)\right)\frac{|C'\setminus R|}{2} \\
& = \left(\phi\left(g(0)\left(1+\frac{t^*}{t_0}\right)\right)+\phi\left(g(0)\left(1-\frac{t^*}{t_0}\right)\right)\right)\frac{|R \setminus C'|}{2} \\
& = \left(\phi\left(g(0)\left(1+\frac{t^*}{t_0}\right)\right)+\phi\left(g(0)\left(1-\frac{t^*}{t_0}\right)\right)\right)\int_{t^*}^{t_0}|M_t^{**}|dt\\
& \leq \int_{t^*}^{t_0}\left(\phi\left(g(0)\left(1+\frac{t}{t_0}\right)\right)+\phi\left(g(0)\left(1-\frac{t}{t_0}\right)\right)\right)|M_t^{**}|dt \\
& = \int_{t^*}^{t_0}\phi\left(g(0)\left(1+\frac{t}{t_0}\right)\right)|M_t^{**}|dt + \int_{-t_0}^{-t^*}\phi\left(g(0)\left(1+\frac{t}{t_0}\right)\right)|M_t^{**}|dt.
\end{split}
\end{equation}

These two upper bounds prove from \eqref{eq:the_two_integrals} that
\[
\begin{split}
\int_C\phi(g(x))dx & = \int_{-t_0}^{t_0}\phi\left(g(0)+\frac{t\delta}{t_0}\right)|M_t|dt \\
 & \leq \int_{-t_0}^{t_0}\phi\left(g(0)\left(1+\frac{t}{t_0}\right)\right)|M^*_t|dt +
\int_{-t_0}^{-t^*}\phi\left(g(0)\left(1+\frac{t}{t_0}\right)\right)|M^{**}_t|dt \\
& + \int_{t^*}^{t_0}\phi\left(g(0)\left(1+\frac{t}{t_0}\right)\right)|M^{**}_t|dt = \int_R \phi(g_0(x))dx,
\end{split}
\]
where $g_0(x)$ is an affine function with $g_0(0)=g(0)$ and $g_0(-t_0,x_2,\dots,x_n)=0$ for every $(x_2,\dots,x_n)\in\R^{n-1}$.
Again by Fubini we now get that
\[
\begin{split}
\int_R \phi(g_0(x))dx & = \int_{-t_0}^{t_0}\phi\left(g(0)\left(1+\frac{t}{t_0}\right)\right)\frac{|R|}{2t_0}dt
= \frac12\int_{-1}^1\phi(g(0)(1+s))ds|R| \\
& = \frac12\int_{-1}^1\phi(f(0)(1+s))ds|C|,
\end{split}
\]
concluding the proof of the inequality.

Let us suppose that $\phi$ is strictly convex.
In the case of equality we must have equality in all inequalities above.
Let us notice that
the strict convexity of $\phi:[0,\infty)\rightarrow[0,\infty)$ implies strict monotonicity.
Indeed, if $x_2>x_1>0$, the strict convexity of $\phi$ implies that
\[
0\leq\frac{\phi(x_1)-0}{x_1-0}<\frac{\phi(x_2)-\phi(x_1)}{x_2-x_1},
\]
as desired.
Equality in \eqref{eq:affine} together with the strict monotonicity of $\phi$ implies that $f$ must be an affine function.
Equalities in \eqref{eq:CandR} together with the strict convexity of $\phi$ force that
$|M_t^{**}|=0$ for every $t\in[0,t_0]$, i.e., $C'$ has to fulfill
\[
|C'\cap(te_1+L)|=|R\cap(te_1+L)|=c
\]
for every $t\in[-t_0,t_0]$ and some constant $c>0$. Since $C'=\sigma_{e_1}C$,
we also have that
\[
|C\cap(te_1+L)|=|C'\cap(te_1+L)|=c.
\]
Notice that $(t_0-t)/(2t_0)\in[0,1]$, and thus by the convexity of $C$
\[
\left(1-\frac{t_0-t}{2t_0}\right)(C\cap(t_0e_1+L))+\frac{t_0-t}{2t_0}(C\cap(-t_0e_1+L))\subset C\cap(te_1+L).
\]
Then, the Brunn-Minkowski inequality \eqref{eq:BandM} implies that
\[
\begin{split}
c^\frac{1}{n-1} & =|C\cap(te_1+L)|^\frac{1}{n-1}\\
& \geq
\left|\left(1-\frac{t_0-t}{2t_0}\right)C\cap(t_0e_1+L)+\frac{t_0-t}{2t_0}C\cap(-t_0e_1+L)\right|^\frac{1}{n-1} \\
& \geq \left(1-\frac{t_0-t}{2t_0}\right)|C\cap(t_0e_1+L)|^{\frac{1}{n-1}}+\frac{t_0-t}{2t_0}|C\cap(-t_0e_1+L)|^\frac{1}{n-1}\\
& =c^{\frac{1}{n-1}}
\end{split}
\]
for every $t\in[-t_0,t_0]$.
Thus the equality case of Brunn-Minkowski inequality \eqref{eq:BandM} implies that $C\cap(t,x_2,\dots,x_n)$
is a translation of the same $(n-1)$-dimensional set for every $t\in[-t_0,t_0]$.
This is equivalent to the fact that
\[
C=[-x_0,x_0]+(\{0\}\times C_0),
\]
where $x_0=(t_0,(x_0)_2,\dots,(x_0)_n)$ and $C_0\in\K^{n-1}_0$. Finally, equality in \eqref{eq:delta}
forces that $\delta=f(0)$, i.e., that $g(-t_0,x_2,\dots,x_n)=0$
for every $(x_2,\dots,x_n)\in\R^{n-1}$, which concludes the equality case.
\end{proof}

\begin{rmk}
Notice that if $C_0\in\mathcal K^n_0$ then
\[
C=[-e_1,e_1]\times C_0\quad\text{and}\quad f(x)=\langle x,e_1\rangle-1
\]
attains equality in Theorem \ref{thm:Ineq_Concave_func} for every not identically zero, convex, non-decreasing function $\phi:[0,\infty)\rightarrow[0,\infty)$
with $\phi(0)=0$.
\end{rmk}

Our first corollary follows from applying Theorem \ref{thm:Ineq_Concave_func} to $\phi(t)=t^\alpha$.
In the next section we will use it to give new estimates of the volume of a convex body in terms
of the volumes of some of its sections and projections.
\begin{cor}\label{cor:tAlpha}
Let $C\in\mathcal K^n_0$, let $f:C\rightarrow[0,\infty)$ be concave, and let $\alpha\geq1$. Then
\[
\frac{1}{|C|}\int_Cf(x)^\alpha dx\leq \frac{2^\alpha}{\alpha+1}f(0)^\alpha.
\]
If $\alpha = 1$ equality holds if and only if $f$ is affine.
If $\alpha > 1$ equality holds if and only if $C$ is a
generalized cylinder, $C=[-x_0,x_0]+(\{0\}\times C_0)$, for some $C_0\in\mathcal K_0^{n-1}$ with $(x_0)_1>0$,
and $f$ is affine with $f(-x_0+x)=0$ for every $x\in\mathbb R^n$ with $x_1=0$.
\end{cor}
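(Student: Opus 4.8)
The plan is to specialize Theorem \ref{thm:Ineq_Concave_func} to $\phi(t)=t^\alpha$ and evaluate the resulting one-dimensional integral. First I would check that $\phi(t)=t^\alpha$ is admissible for $\alpha\ge 1$: it maps $[0,\infty)$ into $[0,\infty)$, it is not identically zero, it is convex (its second derivative $\alpha(\alpha-1)t^{\alpha-2}$ is nonnegative on $(0,\infty)$), and $\phi(0)=0$. Hence Theorem \ref{thm:Ineq_Concave_func} gives
\[
\frac{1}{|C|}\int_C f(x)^\alpha dx \le \frac12\int_{-1}^1 \big(f(0)(1+t)\big)^\alpha dt = \frac{f(0)^\alpha}{2}\int_{-1}^1 (1+t)^\alpha dt .
\]
Then the substitution $s=1+t$ turns the last integral into $\int_0^2 s^\alpha ds = 2^{\alpha+1}/(\alpha+1)$, so the right-hand side becomes $2^\alpha f(0)^\alpha/(\alpha+1)$, which is the asserted inequality.

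For the equality case with $\alpha>1$, I would observe that $\phi(t)=t^\alpha$ is then strictly convex on $[0,\infty)$, so the equality characterization of Theorem \ref{thm:Ineq_Concave_func} applies verbatim: after a suitable rotation there exist $C_0\in\mathcal K^{n-1}_0$ and $x_0\in\R^n$ with $(x_0)_1>0$ so that $C=[-x_0,x_0]+(\{0\}\times C_0)$ and $f$ is affine with $f(-x_0+x)=0$ for every $x\in\{0\}\times\R^{n-1}$, which is exactly the condition $f(-x_0+x)=0$ for all $x\in\R^n$ with $x_1=0$. The ``if and only if'' in Theorem \ref{thm:Ineq_Concave_func} already supplies the converse direction (and the Remark following it exhibits such equality configurations explicitly).

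The one genuinely separate point is the exponent $\alpha=1$, where $\phi(t)=t$ is not strictly convex, so Theorem \ref{thm:Ineq_Concave_func} yields only the inequality $\frac{1}{|C|}\int_C f\,dx\le f(0)$ with no equality statement. To treat it I would instead invoke the centroid form \eqref{eq:HH_Jensen} of the Hermite--Hadamard inequality, $\frac{1}{|C|}\int_C f\,dx\le f(x_C)$, together with the fact that $x_C=0$ since $C=-C$; the equality condition recorded in \eqref{eq:HH_Jensen} (that $f$ be affine) then gives the claim. Thus the only mild obstacle is to peel off this degenerate exponent and argue it via the centroid form of Hermite--Hadamard rather than trying to push it through the strict-convexity branch of Theorem \ref{thm:Ineq_Concave_func}.
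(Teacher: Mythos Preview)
Your proof is correct and follows exactly the approach indicated in the paper, which simply says the corollary follows by applying Theorem \ref{thm:Ineq_Concave_func} to $\phi(t)=t^\alpha$. Your additional care in handling the equality case for $\alpha=1$ separately via the classical Hermite--Hadamard inequality \eqref{eq:HH_Jensen} (since $\phi(t)=t$ is not strictly convex) is appropriate and fills in a detail the paper leaves implicit.
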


Yet another corollary to Theorem \ref{thm:Ineq_Concave_func} is when we apply it to $\phi(t)=e^t-1$.
\begin{proof}[Proof of Theorem \ref{thm:log-concaveDirect}]
Let $f(x)=e^{u(x)}$, with $u:C\rightarrow\R$ a concave function. Applying Theorem \ref{thm:Ineq_Concave_func}
to the function $u(\cdot)-u_0$ in $C$, where $u_0=\min_{x\in C}u(x)$, and to $\phi(t)=e^t-1$ we obtain that
\[
\begin{split}
\frac{1}{|C|}\int_Ce^{u(x)}dx & =\frac{e^{u_0}}{|C|}\int_C(e^{u(x)-u_0})dx \\
 & =e^{u_0}\left(1+\frac{1}{|C|}\int_C\phi(u(x)-u_0)dx\right) \\
 & \leq e^{u_0}\left(1+\frac12\int_{-1}^1\phi((u(0)-u_0)(1+t))dt \right)\\
 & =e^{u_0}\left(1+\frac12\int_{-1}^1(e^{(u(0)-u_0)(1+t)}-1)dt\right) \\
 & = e^{u_0}\left(1+\frac12\left(\frac{e^{2(u(0)-u_0)}-1}{u(0)-u_0}-2\right)\right) \\
 & =\frac{e^{u_0}}{2}\frac{e^{2(u(0)-u_0)}-1}{u(0)-u_0}
\end{split}
\]
which shows the result.

Since $\phi(t)=e^t-1$ is strictly convex,
the equality case follows immediately from the equality case of Theorem \ref{thm:Ineq_Concave_func}.
\end{proof}

\section{Estimating sizes of convex sets by their marginals}\label{sec:RS_ineqs}

We start this section by proving Theorem \ref{thm:0-symm_GeneralCase} as a consequence of Corollary \ref{cor:tAlpha}.
\begin{proof}[Proof of Theorem \ref{thm:0-symm_GeneralCase}]
By Fubini's formula, we have that
\[
|K|=\int_{P_HK}|K\cap(x+H^\bot)|dx.
\]
By the Brunn's Concavity Principle (see \cite[Prop.~1.2.1]{Giann}, see also \eqref{eq:BandM}) then
\[
f:H\rightarrow[0,\infty)\quad\text{where}\quad f(x):=|K\cap(x+H^\bot)|^{\frac{1}{n-i}}
\]
is a concave function.
After a suitable rigid motion, we assume that $H=\R^i\times\{0\}^{n-i}$. Corollary \ref{cor:tAlpha} then implies that
\[
\begin{split}
\int_{P_HK}f(x)^{n-i}dx & \leq\frac{2^{n-i}}{n-i+1}|P_HK|f(0)^{n-i}\\
& =\frac{2^{n-i}}{n-i+1}|P_HK||K\cap H^\bot|,
\end{split}
\]
concluding the result.

For the equality case, we must have equality in Corollary \ref{cor:tAlpha} where
$f(x)=|K\cap(x+H^\bot)|^\frac{1}{n-i}$, $C=P_HK$, and $\alpha=n-i$.
Hence, first of all, $f(x)$ must be an affine function. We hence can write
\[
f(x)=f(0)+\langle u,x\rangle,
\]
for some $u\in\R^i\times\{0\}^{n-i}$. This means in particular that
\[
\begin{split}
&|K\cap((1-\lambda)x+\lambda y)|^\frac{1}{n-i}\\
&=f((1-\lambda)x+\lambda y)\\
&=f(0)+\langle u,(1-\lambda)x+\lambda y\rangle\\
&=(1-\lambda)(f(0)+\langle u,x\rangle)+\lambda(f(0)+\langle u,y\rangle)\\
&=(1-\lambda)|K\cap(x+H^\bot)|^\frac{1}{n-i}+\lambda|K\cap(y+H^\bot)|^\frac{1}{n-i}.
\end{split}
\]
Hence, using Brunn-Minkowski equality case \eqref{eq:BandM},
we have that $K\cap(x+H^\bot)$ are dilates, of volume
\[
|K\cap(x+H^\bot)|^\frac{1}{n-i}=\langle u,x\rangle+|K\cap H^\bot|^\frac{1}{n-i}.
\]
Since $K$ is convex, there exists an $n\times n$ matrix $B$ of the form
\[
B=\left(\begin{array}{cc}\text{I}_i & 0\\ B_0 & 0\end{array}\right),
\]
where $B_0$ is an $(n-i)\times i$ matrix and $\text{I}_i$ is the $i$-dimensional identity matrix,
such that
\[
K\cap(x+H^\bot)=B(x)+\lambda_x (K\cap H^\bot),
\]
with
\[
\lambda_x=\frac{|K\cap(x+H^\bot)|^\frac{1}{n-i}}{|K\cap H^\bot|^\frac{1}{n-i}}
=\frac{\langle u,x\rangle+|K\cap H^\bot|^\frac{1}{n-i}}{|K\cap H^\bot|^\frac{1}{n-i}}.
\]
Second, if $\alpha=n-i\geq 2$, i.e.~$i\leq n-2$, then we moreover have that there exist $K_0\in\K^{i-1}$ and $x_0\in\R^i$
such that $P_HK=[-x_0,x_0]+(\{0\}\times K_0)$. Moreover, we must have also that
\[
f(-x_0,x_{i+1},\dots,x_n)=0\quad\text{for every }(x_{i+1},\dots,x_n)\in\R^{n-i},
\]
i.e., that $|K\cap((-x_0,x_{i+1},\dots,x_n)+H^\bot)|=0$. Once more since $f$ is affine, this means that
\[
u=\frac{|K\cap H^\bot|^\frac{1}{n-i}}{\Vert x_0\Vert^2}x_0,
\]
i.e., that
\[
|K\cap(x+H^\bot)|^\frac{1}{n-i}=\frac{|K\cap H^\bot|^\frac{1}{n-i}}{\Vert x_0\Vert^2}\langle x_0,x\rangle+|K\cap H^\bot|^\frac{1}{n-i},
\]
thus concluding the equality case.
\end{proof}

\begin{rmk}
For any $C_0\in\mathcal K^{i-1}_0$, $C_1\in\mathcal K^{n-i}$, the set
\[
C=\left\{(t,x_2,\dots,x_n):t\in[-1,1],(x_2,\dots,x_i)\in C_0,(x_{i+1},\dots,x_n)\in (1+t)C_1\right\}
\]
together with the subspace $H=\mathrm{lin}(\{e_1,\dots,e_i\})$ achieves equality in Theorem \ref{thm:0-symm_GeneralCase}.
\end{rmk}

We now properly state \eqref{eq:SectProjJensen} along with the characterization of its equality cases.
Notice that here we do not require $P_HK$ to be 0-symmetric.
\begin{thm}\label{thm:center_of_Projection}
Let $K\in\K^n$ and $H\in\L^n_{n-1}$. Then
\[
|K|\leq|P_HK||K\cap(x_{P_HK}+H^\bot)|.
\]
If we assume w.l.o.g.~that $H=\R^{n-1}\times\{0\}$ and that $x_{P_HK}=\{0\}$,
there is equality above if and only if there exist $b,u\in\R^{n-1}\times\{0\}$
such that $K\cap(x+H^\bot)=(x_1,\dots,x_{n-1},\langle b,x\rangle)+\lambda_x K\cap H^\bot$, where
\[
\lambda_x=\frac{\langle u,x\rangle+|K\cap H^\bot|}{|K\cap H^\bot|},
\]
for every $x\in P_HK$.
\end{thm}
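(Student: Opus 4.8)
The plan is to run the same Fubini--Brunn argument used for Theorem \ref{thm:0-symm_GeneralCase}, but replacing the symmetric Hermite--Hadamard estimate (Corollary \ref{cor:tAlpha}) by the \emph{classical} Hermite--Hadamard inequality \eqref{eq:HH_Jensen}, which does not require any symmetry of the domain. First I would write, by Fubini's formula, $|K|=\int_{P_HK}|K\cap(x+H^\bot)|\,dx$. Since $H\in\L^n_{n-1}$, the fibers $K\cap(x+H^\bot)$ are $1$-dimensional segments, so $f(x):=|K\cap(x+H^\bot)|$ is exactly the length of that segment; by the Brunn Concavity Principle (see \cite[Prop.~1.2.1]{Giann}, or equivalently the $n-i=1$ case of \eqref{eq:BandM}) this $f:P_HK\to[0,\infty)$ is concave. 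Applying \eqref{eq:HH_Jensen} with $C=P_HK$ and this $f$ gives
\[
|K|=\int_{P_HK}f(x)\,dx\le |P_HK|\,f(x_{P_HK})=|P_HK|\,|K\cap(x_{P_HK}+H^\bot)|,
\]
which is the asserted inequality. Note that here there is no factor $2^{n-i}/(n-i+1)$ precisely because $n-i=1$ makes it equal to $1$; this is the reason the statement is clean and needs no $0$-symmetry of $P_HK$.

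For the equality case I would invoke the equality characterization in \eqref{eq:HH_Jensen}: equality forces $f$ to be affine on $P_HK$. Assuming w.l.o.g.\ $H=\R^{n-1}\times\{0\}$ and $x_{P_HK}=0$, write $f(x)=|K\cap H^\bot|+\langle u,x\rangle$ for some $u\in\R^{n-1}\times\{0\}$. As in the proof of Theorem \ref{thm:0-symm_GeneralCase}, affinity of $f$ says the fiber lengths interpolate linearly, and since these fibers are segments the equality case of Brunn--Minkowski \eqref{eq:BandM} (in dimension one this is just the statement that three collinearly-parametrized segments whose lengths interpolate linearly are homothetic copies of one another) shows the segments $K\cap(x+H^\bot)$ are all dilates of $K\cap H^\bot$, of length $\langle u,x\rangle+|K\cap H^\bot|$. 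Convexity of $K$ then forces the midpoints (equivalently, one endpoint) of the fibers to move affinely in $x$: there is $b\in\R^{n-1}\times\{0\}$ with the fiber over $x$ centered appropriately, so that $K\cap(x+H^\bot)=(x_1,\dots,x_{n-1},\langle b,x\rangle)+\lambda_x\,(K\cap H^\bot)$ with $\lambda_x=(\langle u,x\rangle+|K\cap H^\bot|)/|K\cap H^\bot|$, as claimed. Conversely any such $K$ clearly has $f$ affine, hence equality in \eqref{eq:HH_Jensen}, hence in the displayed inequality.

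The only genuinely nontrivial point is the last one --- deducing the affine "shape" matrix $b$ from convexity of $K$. Everything else (Fubini, Brunn concavity, classical Hermite--Hadamard) is quoted. For the shape claim I would argue exactly as in Theorem \ref{thm:0-symm_GeneralCase}: once all fibers are dilates of a fixed segment $S_0=K\cap H^\bot$ with ratio $\lambda_x$ that is affine in $x$, convexity of $K=\{(x,y):x\in P_HK,\ y\in c(x)+\lambda_x S_0\}$ (where $c(x)\in\R$ records the translate) is equivalent to $x\mapsto c(x)$ being affine on $P_HK$, because the "upper'' and "lower'' boundary functions $c(x)\pm\tfrac12\lambda_x|S_0|$ must be concave and convex respectively and $\lambda_x$ is already affine. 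Writing the affine function $c$ as $x\mapsto\langle b,x\rangle$ (the constant term being absorbed by the choice of origin on $H^\bot$) finishes the characterization. I expect this bookkeeping to be the main --- though routine --- obstacle; there is no deeper difficulty since the $n-i=1$ case sidesteps the symmetric inequality entirely.
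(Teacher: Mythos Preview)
Your proposal is correct and follows essentially the same route as the paper: Fubini, Brunn's concavity principle applied to the section function $f(x)=|K\cap(x+H^\bot)|$, and the classical Hermite--Hadamard inequality \eqref{eq:HH_Jensen} at the centroid of $P_HK$. The paper's proof is in fact terser than yours---for the equality case it simply writes ``follows as the equality case of Theorem \ref{thm:0-symm_GeneralCase}''---so your explicit argument that convexity of $K$ forces the center function $c(x)$ to be affine (via the observation that $c(x)+\tfrac12\lambda_x|S_0|$ concave, $c(x)-\tfrac12\lambda_x|S_0|$ convex, and their difference affine together force both to be affine) is a welcome elaboration rather than a deviation.
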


\begin{proof}
Let us consider the function
\[
f:P_HK\rightarrow[0,\infty)\quad\text{with}\quad f(x)=|K\cap(x+H^\bot)|
\]
which by the Brunn's Concavity Principle, is a concave function.
Hence, using Fubini's formula and \eqref{eq:HH_Jensen} we directly obtain that
\[
|K|=\int_{P_HK}f(x)dx\leq|P_HK|f(x_{P_HK})=|P_HK||K\cap(x_{P_HK}+H^\bot)|,
\]
as desired.

The equality case follows as the equality case of Theorem \ref{thm:0-symm_GeneralCase}.
\end{proof}

We now give two observations out of Theorem \ref{thm:center_of_Projection}.
\begin{rmk}\label{rmk:x_Pbetterx_C}
Let us observe that Theorem \ref{thm:center_of_Projection} sometimes gives a tighter inequality than \eqref{eq:volumes_and_centroid}.
Indeed, if we consider the cone $K\in\mathcal K^n$ with apex at $e_n$ and basis $B^n_2\cap\mathrm{lin}(\{e_1,\dots,e_{n-1}\})$,
and consider $H=\mathrm{lin}(\{e_1,\dots,e_{n-2},e_n\})$, it is straightforward to check that
$P_HK=\mathrm{conv}((B^n_2\cap\mathrm{lin}(\{e_1,\dots,e_{n-2}\}))\cup\{e_n\})$, that
\[
x_K=\left(0,\dots,0,\frac{1}{n+1}\right)\quad\text{and}\quad x_{P_HK}=\left(0,\dots,0,\frac{1}{n}\right).
\]
Therefore, since $|K\cap(x_K+H^\bot)|=\frac{2n}{n+1}>\frac{2(n-1)}{n}=|K\cap(x_{P_HK}+H^\bot)|$,
\[
\frac{|K|}{|P_HK||K\cap(x_K+H^\bot)|}<\frac{|K|}{|P_HK||K\cap(x_{P_HK}+H^\bot)|}<1.
\]
\end{rmk}

One can combine two of those inequalities to show that any point in the line segment determined by two good choices
of points (as in \eqref{eq:volumes_and_centroid}), is again a good choice.
\begin{rmk}
If for some $K\in\K^n$ and $H\in\L^n_i$ there exist points $x_0,x_1\in K$ such that
\begin{equation}\label{eq:improved_estimate}
\frac{|K|}{|P_HK|}\leq|K\cap(x_j+H^\bot)|,\quad\text{for }j=0,1,
\end{equation}
then, for every $\lambda\in[0,1]$, the Brunn-Minkowski inequality \eqref{eq:BandM} gives that
\[
\begin{split}
|K\cap((1-\lambda)x_0 & +\lambda x_1+H^\bot)| \\
& \geq \left((1-\lambda)|K\cap(x_0+H^\bot)|^\frac{1}{n-i}+\lambda|K\cap(x_1+H^\bot)|^\frac{1}{n-i}\right)^{n-i}\\
& \geq \frac{|K|}{|P_HK|},
\end{split}
\]
i.e., all points $(1-\lambda)x_0+\lambda x_1$ also fulfills the inequality \eqref{eq:improved_estimate}, $\lambda\in[0,1]$.
In particular, from Theorem \ref{thm:center_of_Projection} and \eqref{eq:volumes_and_centroid} with $i=n-1$,
we obtain that $c_\lambda=(1-\lambda)x_K+\lambda x_{P_HK}$ gives also an inequality of the same type.
\end{rmk}

%We finish this section by proving \eqref{eq:RandS} as a direct consequence of Corollary \ref{cor:FuncRS}.
%\begin{proof}[Proof of \eqref{eq:RandS}]
%Let us observe that Brunn-Minkowski inequality \eqref{eq:BandM} implies that
%\[
%f:P_HK\rightarrow[0,\infty)\quad\text{with}\quad f(x)=|K\cap(x+H^\bot)|^\frac{1}{n-i}
%\]
%is a concave function. Therefore Fubini's formula and Corollary \ref{cor:FuncRS} imply that
%\[
%\begin{split}
%|K| =\int_{P_HK}|K\cap(x+H^\bot)|dx =\int_{P_HK}f(x)^{n-i}dx & \geq {n\choose i}^{-1}|P_HK|f(0)^{n-i}\\
%& ={n\choose i}^{-1}|P_HK||K\cap H^\bot|,
%\end{split}
%\]
%proving the inequality.

%The equality case follows immediately as in the equality case of Theorem \eqref{thm:0-symm_GeneralCase}.
%\end{proof}

\emph{Acknowledgements:} I would like to thank David Alonso-Guti\'errez, Matthieu Fradelizi, Sasha Litvak, Kasia Wyczesany, Rafael Villa, and Vlad Yaskin
for useful remarks and discussions, and Francisco Santos for sharing the question that motivated this article.

I would also like to thank the invaluable comments and corrections of the anonymous referee that enormously improved the
presentation of this article.

\end{document}